\documentclass[reqno, 11pt]{amsart}
\usepackage{amsmath}
 \usepackage{amssymb}
\usepackage{amsthm}
\usepackage{times}
\usepackage{latexsym}
\usepackage[mathscr]{eucal}

\numberwithin{equation}{section}
 
  \newtheorem{theorem}{Theorem}[section]

  \newtheorem{remark}[theorem]{Remark}

  \newtheorem{example}[theorem]{Example}

\title[Some symmetry properties of four-dimensional Walker manifolds]{Some symmetry properties of four-dimensional Walker manifolds}

\author[Abdoul Salam Diallo, Fortun\'{e} Massamba ]{Abdoul Salam Diallo*, Fortun\'{e} Massamba**}

\newcommand{\acr}{\newline\indent}

\address{\llap{*\,} Universit\'e Alioune Diop de Bambey\acr
UFR SATIC, D\'epartement de Math\'ematiques\acr
B. P. 30, Bambey, S\'en\'egal}
\email{abdoulsalam.diallo@uadb.edu.sn}
\address{\llap{**\,} School of Mathematics, Statistics and Computer Science\acr
 University of KwaZulu-Natal\acr
 Private Bag X01, Scottsville 3209\acr
South Africa}
\email{massfort@yahoo.fr, Massamba@ukzn.ac.za}

\thanks{} 

\subjclass[2010]{Primary 53B05; Secondary 53B20}

\keywords{Einstein metric, locally symmetry, locally conformally flat, Walker metrics.}

\begin{document}
 
\begin{abstract}  
In this paper, we investigate geometric properties of some curvature tensors of a four-dimensional Walker manifold. Some characterization theorems are also obtained.
\end{abstract}

\maketitle

\section{Introduction} 

A Walker $n$-manifold is a pseudo-Riemannian manifold, which admits a field of parallel null
$r$-planes, with $r\leq \frac{n}{2}$. The canonical forms of the metrics were investigated
by A. G. Walker \cite{wa}. Of special interest are the even-dimensional Walker manifolds $(n=2m)$
with fields of parallel null planes of half dimension $(r=m)$.

It is known that Walker metrics have served as a powerful tool of constructing interesting
indefinite metrics which exhibit various aspects of geometric properties not given by any
positive definite metrics. Among these, the significant Walker manifolds are the examples
of the non-symmetric and non-homogeneous Osserman manifolds \cite{bro,cgm}. Recently, Banyaga and Massamba derived in \cite{bm} a Walker metric when studying the non-existence of certain Einstein metric on some 
symplectic manifolds.

Our purpose is to study a restricted $4$-Walker metrics by focusing on their curvature properties.
The main results of this paper are the characterization of Walker metrics which are Einstein, locally symmetric 
Einstein and locally conformally flat. The paper is organized as follows. In section \ref{canonical}, we recall some basic facts about Walker metrics by explicitly writing its Levi-Civita connection and the curvature tensor. Walker metrics which are Einstein are investigated in section \ref{Einstein} (Theorem \ref{thmEinstein}). In section \ref{einsteinwalker}, we study the Walker metrics which are locally symmetric Einstein (Theorem \ref{thmLocallySymmetric}). Finally, we discuss in section \ref{conformal}, the conformally locally flat property of Walker metric (Theorem \ref{thmLocallyConformallyFlat}).

\section{The canonical form of a Walker metric}\label{canonical}

Let $M$ be a pseudo-Riemannian manifold of signature $(n,n)$. We suppose given a splitting of the
tangent bundle in the form $TM=\mathcal{D}_1\oplus \mathcal{D}_2$ where $\mathcal{D}_1$ and 
$\mathcal{D}_2$ are smooth subbundles which are called distribution. This define two complementary 
projection $\pi_1$ and $\pi_2$ of $TM$ onto $\mathcal{D}_1$ and $\mathcal{D}_2$. We say that
$\mathcal{D}_1$ is parallel distribution if $\nabla \pi_1=0$. Equivalently this means that if 
$X_1$ is any smooth vector field taking values in $\mathcal{D}_1$, then $\nabla X_1$ again takes
values in $\mathcal{D}_1$. If $M$ is Riemannian, we can take $\mathcal{D}_2=\mathcal{D}^{\perp}_{1}$ 
to be the orthogonal complement of $\mathcal{D}_1$ and it that case $\mathcal{D}_2$ is again parallel. 
In the pseudo-Riemannian setting, $\mathcal{D}_1\cap \mathcal{D}_2$ need not be trivial. We say 
that $\mathcal{D}_1$ is a null parallel distribution if $\mathcal{D}_1$ is parallel and the
metric restricted to $\mathcal{D}_1$ vanish identically. Manifolds which admit null parallel 
distribution are called Walker manifolds.\\ 
A neutral $g$ on an $4$-manifold $M$ is said to be a Walker metric if there exists a $2$-dimensional
null distribution $\mathcal{D}$ on $M$ which is parallel with respect to $g$. From Walker
theorem \cite{wa}, there is a system of coordinates $(u_1,u_2,u_3,u_4)$ with respect to which 
$g$ takes the local canonical form
\begin{eqnarray*}
(g_{ij}) =
\left(
      \begin{array}{cc}
       0&I_2\\
       I_2&B
       \end{array}
\right),
\end{eqnarray*}
where $I_2$ is the $2\times 2$ identity matrix and $B$ is a symmetric $2\times 2$ matrix whose the 
coefficients are the functions of the $(u_1,\cdots,u_4)$. Note that $g$ is of neutral signature
$(++--)$ and that the parallel null $2$-plane $\mathcal{D}$ is spanned locally by 
$\{\partial_1,\partial_2\}$, where $\partial_i=\frac{\partial}{\partial_i}, i=1,2,3,4$.\\
Let $M_{a,b,c}:=(\mathcal{O},g_{a,b,c})$, where $\mathcal{O}$ be an open subset of $\mathbb{R}^4$ 
and $a,b,c\in \mathcal{C}^{\infty}(\mathcal{O})$ be smooth functions on $\mathcal{O}$, then
\begin{eqnarray*}
(g_{a,b,c})_{ij} =
\left(
      \begin{array}{cccccc}
       0&0&1&0\\
       0&0&0&1\\
       1&0&a&c\\
       0&1&c&b
      \end{array}
\right),
\end{eqnarray*}
where $a,b$ and $c$ are functions of the $(u_1,\cdots,u_4)$. We denote, $h_i=\frac{\partial h(u_1,\cdots,u_4)}{\partial u_i}$,
for any function $h(u_1,\cdots,u_4)$. In \cite{cgm}, Einsteinian, Osserman or locally conformally flat Walker manifolds 
were investigated in the restricted form of metric when $c(u_1,u_2,u_3,u_4)=0$. In this paper, following \cite{cgm}, we 
consider the specific Walker metrics on a $4$-dimensional manifold with
\begin{eqnarray}\label{e1}
 a=a(u_1,u_2),\quad b=b(u_1,u_2)\quad \mbox{and}\quad  c=c(u_1,u_2),
\end{eqnarray}
and investigate conditions for a Walker metric (\ref{e1}) to be Einsteinian, locally symmetric Einstein and 
locally conformally flat.\\
A straightforward calculation show that the Levi-Civita connection of a Walker metric (\ref{e1}) is given by
\begin{align*} \label{e2}
\nabla_{\partial_1} \partial_3 &=  \frac{1}{2}a_1 \partial_1 + \frac{1}{2}c_1 \partial_2,\quad \quad
\nabla_{\partial_1} \partial_4 = \frac{1}{2}c_1 \partial_1 + \frac{1}{2}b_1 \partial_2,\\ 
\nabla_{\partial_2} \partial_3 &=  \frac{1}{2}a_2 \partial_1 + \frac{1}{2}c_2 \partial_2,\quad \quad
\nabla_{\partial_2} \partial_4 = \frac{1}{2}c_2 \partial_1 + \frac{1}{2}b_2 \partial_2,\\ 
\nabla_{\partial_3} \partial_3 &=  \frac{1}{2}(aa_1 + ca_2)\partial_1
+ \frac{1}{2}(ba_2 + ca_1)\partial_2 -\frac{1}{2}a_1\partial_3 - \frac{1}{2}a_2\partial_4, 
\end{align*} 
\begin{align*} 
\nabla_{\partial_3} \partial_4 &=  \frac{1}{2}(ac_1+ cc_2)\partial_1 + \frac{1}{2}(bc_2 + cc_1)\partial_2
-\frac{1}{2}c_1\partial_3 - \frac{1}{2}c_2\partial_4,\nonumber\\
\nabla_{\partial_4} \partial_4 &=  \frac{1}{2}(ab_1 + cb_2)\partial_1
+ \frac{1}{2}(bb_2 + cb_1)\partial_2 -\frac{1}{2}b_1\partial_3 - \frac{1}{2}b_2\partial_4.
\end{align*}
From relations above, after a long but straightforward calculation we get that the nonzero components of the
$(0,4)$-curvature tensor of any Walker metric (\ref{e1}) are determined by
\begin{eqnarray}\label{e3}
R_{1313}&=& \frac{1}{2}a_{11}, \quad R_{1314}=\frac{1}{2}c_{11}, \quad R_{1323}=\frac{1}{2}a_{12},
\quad R_{1324}=\frac{1}{2}c_{12},\nonumber\\
R_{1334}&=& \frac{1}{4}(a_2b_1 - c_1c_2),\nonumber\\
R_{1414}&=& \frac{1}{2}b_{11},\quad R_{1423}=\frac{1}{2}c_{12},\quad R_{1424}=\frac{1}{2}b_{12},\nonumber\\
R_{1434}&=& \frac{1}{4}(c^{2}_{1} -a_1b_1 +b_1c_2 -b_2c_1)\nonumber\\
R_{2323}&=& \frac{1}{2}a_{22},\quad R_{2324}=\frac{1}{2}c_{22},\nonumber\\
R_{2334}&=& \frac{1}{4}(-c^{2}_{2} +a_2b_2 +a_1c_2-a_2c_1),\nonumber\\
R_{2424}&=& \frac{1}{2}b_{22},\quad R_{2434}=\frac{1}{4}(-a_2b_1 +c_1c_2),\nonumber\\
R_{3434}&=& \frac{1}{4}(ac^{2}_{1} +bc^{2}_{2} -aa_1b_1 -ca_1b_2 
- ca_2b_1 -ba_2b_2 +2cc_1c_2).
\end{eqnarray}
Next, let $\rho(X,Y)=\mathrm{trace}\{Z\longrightarrow R(X,Z)Y\}$ and $Sc=\mathrm{tr}(\rho)$, be the Ricci tensor and the scalar
curvature respectively. Then from (\ref{e3}) we have
\begin{eqnarray}\label{e4}
  \rho_{13} &=& \frac{1}{2}(a_{11}+c_{12}), \quad \rho_{14} = \frac{1}{2}(b_{12} +c_{11}),\nonumber\\
  \rho_{23} &=& \frac{1}{2}(a_{12}+c_{22}), \quad \rho_{24} = \frac{1}{2}(b_{22} +c_{12}),\nonumber\\
  \rho_{33} &=& \frac{1}{2}(-c^{2}_{2} +a_1c_2 +a_2b_2 -a_2c_1 +aa_{11} +2ca_{12}+ba_{22}),\nonumber\\
  \rho_{34} &=& \frac{1}{2}(-a_2b_1 +c_1c_2 +ac_{11} +2cc_{12} +bc_{22}),\nonumber\\
  \rho_{44} &=& \frac{1}{2}(-c^{2}_{1} +a_1b_1 -b_1c_2 +b_2c_1 +ab_{11} +2cb_{12} +bb_{22})
 \end{eqnarray}
 and
\begin{equation}\label{e5}
  Sc =\sum_{i,j=1}^{4} g^{ij} \rho_{ij} = a_{11} + b_{22} + 2c_{12}.
 \end{equation} 
 The nonzero components of the Einstein tensor $G_{ij} = \rho_{ij} -\frac{Sc}{4}g_{ij}$ are given by
 \begin{eqnarray}\label{e6}
  G_{13} &=& \frac{1}{4}a_{11} -\frac{1}{4}b_{22}, \quad G_{14} =\frac{1}{2}c_{11} + \frac{1}{2}b_{12},\nonumber\\
  G_{23} &=& \frac{1}{2}a_{12} +\frac{1}{2}c_{22}, \quad G_{24} = \frac{1}{4}b_{22} - \frac{1}{4}a_{11},\nonumber\\
  G_{33} &=& \frac{1}{4}aa_{11} +ca_{12} +\frac{1}{2}ba_{22} -\frac{1}{2}a_2c_1 +\frac{1}{2}a_1c_2\nonumber\\
  &+& \frac{1}{2}a_2b_2 -\frac{1}{2}c^{2}_{2} -\frac{1}{2}ac_{12} -\frac{1}{4}ab_{22},\nonumber\\
  G_{34} &=& \frac{1}{2}ac_{11} +\frac{1}{2}cc_{12} -\frac{1}{2}a_2b_1 +\frac{1}{2}c_1c_2 +\frac{1}{2}bc_{22}
  -\frac{1}{4}ca_{11} -\frac{1}{4}cb_{22},\nonumber\\
  G_{44} &=& \frac{1}{2}ab_{11} +cb_{12} -\frac{1}{2}c^{2}_{1} +\frac{1}{2}a_1b_1 -\frac{1}{2}b_1c_2 
  +\frac{1}{2}b_2c_1\nonumber\\
  &+& \frac{1}{4}bb_{22} -\frac{1}{4}ba_{11} -\frac{1}{2}bc_{12}.
 \end{eqnarray}

\section{Einstein Walker metrics}\label{Einstein}

We now turn our attention to be Einstein conditions for the Walker metric (\ref{e1}).
A Walker metric is said to be Einstein Walker metric if its Ricci tensor is a scalar multiple of 
the metric at each point i.e., there is a constant $\lambda$ so that
$$
\rho= \lambda g.
$$
The main result in this section is the following
\begin{theorem}\label{thmEinstein}
 A Walker metric (\ref{e1}) is Einstein if and only if the defining functions $a,b$ and $c$ are solution
 of the following PDES:
 \begin{eqnarray*}
 a_{11} &-& b_{22} =0, \quad b_{12} + c_{11} =0,\quad a_{12} + c_{22}=0, \nonumber\\
 a_1c_2 &+& a_2b_2 - a_2c_1 - c^{2}_{2} + 2ca_{12} + ba_{22} - ac_{12} =0,\nonumber\\
 a_2b_1 &-& c_1c_2 +  ca_{11} - ac_{11} - cc_{12} + bc_{22}  =0,\nonumber\\
 a_1b_1 &-& b_1c_2 + b_2c_1 - c^{2}_{1} + ab_{11} + 2cb_{12} - bc_{12} =0.
\end{eqnarray*}
\end{theorem}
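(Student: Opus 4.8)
The plan is to recognise that the Einstein equation $\rho=\lambda g$ is nothing but the vanishing of the traceless Ricci tensor $G_{ij}=\rho_{ij}-\tfrac{Sc}{4}g_{ij}$ already recorded in \eqref{e6}, so that the statement reduces to reading off those six equations and tidying them up. First I would observe that in the forward direction, if $\rho=\lambda g$ with $\lambda$ constant, then contracting with $g^{ij}$ and using \eqref{e5} gives $Sc=4\lambda$, whence $\lambda=\tfrac{Sc}{4}$ and therefore $G_{ij}=\rho_{ij}-\lambda g_{ij}=0$ for all $i,j$. For the converse I would use that $G_{ij}=0$ says precisely that $\rho_{ij}=\tfrac{Sc}{4}g_{ij}$ pointwise; since $\dim M=4\ge 3$, Schur's lemma---whose proof rests only on the contracted second Bianchi identity $\nabla^{i}\rho_{ij}=\tfrac12\nabla_{j}Sc$ and is therefore valid in neutral signature---forces $\tfrac{Sc}{4}$ to be constant, so the metric is genuinely Einstein. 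Thus it suffices to transcribe the equations $G_{ij}=0$.

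The off-diagonal block requires no real work. Reading \eqref{e6}, I would note that $G_{13}=\tfrac14(a_{11}-b_{22})$ while $G_{24}=\tfrac14(b_{22}-a_{11})=-G_{13}$, so these two conditions coincide and give $a_{11}-b_{22}=0$; likewise $G_{14}=\tfrac12(c_{11}+b_{12})$ and $G_{23}=\tfrac12(a_{12}+c_{22})$ yield $b_{12}+c_{11}=0$ and $a_{12}+c_{22}=0$. These are the first three displayed equations.

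The only substantive step is the reduction of the three diagonal components. Imposing $G_{33}=0$, $G_{34}=0$, $G_{44}=0$ and substituting the relation $b_{22}=a_{11}$ just obtained makes the pure second-order pieces telescope---in $G_{33}$ the term $-\tfrac14 ab_{22}$ cancels $\tfrac14 aa_{11}$, in $G_{44}$ the term $-\tfrac14 ba_{11}$ cancels $\tfrac14 bb_{22}$, and in $G_{34}$ the pair $-\tfrac14(ca_{11}+cb_{22})$ collapses to $-\tfrac12 ca_{11}$---after which the surviving curvature-quadratic terms and the remaining second derivatives recombine, on clearing the factor $\tfrac12$, into the last three equations of the statement. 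I expect this bookkeeping to be the main obstacle: it is purely mechanical, but with four quadratic cross terms in each of $G_{33}$ and $G_{44}$ one must track the signs with care. Once all six equations are identified with $G_{ij}=0$, the forward implication together with Schur's lemma for the converse establishes the claimed equivalence.
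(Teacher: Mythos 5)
Your proposal follows essentially the same route as the paper: the Einstein condition is identified with the vanishing of the traceless Ricci tensor $G_{ij}=\rho_{ij}-\frac{Sc}{4}g_{ij}$ computed in (\ref{e6}), and the six stated PDEs are read off directly (using $a_{11}=b_{22}$ to simplify the diagonal components). You are in fact somewhat more careful than the paper, which passes from $\rho=\lambda g$ to $G_{ij}=0$ without comment; your trace argument for the forward direction and the appeal to Schur's lemma for constancy of $\lambda$ in the converse make explicit a step the paper leaves implicit.
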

\begin{proof}
From (\ref{e6}), the Einstein condition is equivalent to $G_{ij}=\rho_{ij}-\frac{Sc}{4}g_{ij}$. We get the following PDEs:
\begin{eqnarray}\label{e7}
 a_{11} &-& b_{22} =0, \quad b_{12} + c_{11} =0,\quad a_{12} + c_{22}=0, \nonumber\\
 a_1c_2 &+& a_2b_2 - a_2c_1 - c^{2}_{2} + 2ca_{12} + ba_{22} - ac_{12} =0,\nonumber\\
 a_2b_1 &-& c_1c_2 +  ca_{11} - ac_{11} - cc_{12} + bc_{22}  =0,\nonumber\\
 a_1b_1 &-& b_1c_2 + b_2c_1 - c^{2}_{1} + ab_{11} + 2cb_{12} - bc_{12} =0.
\end{eqnarray}
This system of partial differential equations (\ref{e7}) is hard to solve.
\end{proof}
In \cite{mm}, the authors apply the Lie symmetry group method to determine the Lie point symmetry group and provide example 
of solution of the system of partial differential equations (\ref{e7}).
\begin{example}\cite{mm} {\rm Let $(M,g_{a,b,c})$ be a Walker metric with 
\begin{eqnarray*}
a=-\frac{r_1}{r_2}e^{r_1u_1}e^{u_2}, b= -r_1r_2e^{r_1u_1}e^{u_2},\quad \mbox{and}\quad 
c=r_2e^{r_1u_1}e^{u_2}
\end{eqnarray*}
where $r_i$'s are arbitrary constants. Then the Walker metric $(M,g_{a,b,c})$ is Ricci flat and Einstein.
}
\end{example}

For the following restricted Walker metric 
\begin{eqnarray}\label{e8}
 a= a(u_1,u_2),\quad b= b(u_1,u_2)\quad \mbox{and}\quad  c=0
\end{eqnarray}
we have the following result.
\begin{theorem}\label{thmEinstein2}
A Walker metric (\ref{e8}) is Einstein if and only if the functions $a(u_1,u_2)$ and $b(u_1,u_2)$ are as follows:
\begin{eqnarray*}
a&=& a(u_1,u_2) = Ku^{2}_{1} + Au_1 + B(u_2),\\
b&=& b(u_1,u_2) = Ku^{2}_{2} + Cu_2 + D(u_1),
\end{eqnarray*}
where $K, A$ and $C$ are constants and $B,D$ are smooth functions satisfying the following PDE's:
\begin{eqnarray*}
B_2 D_1 &=&0,\\
(D_1 (u^{2}_{1}K + u_1A + B))_1 &=& 0,\\
(B_2 (u^{2}_{2}K + u_2C + D))_2 &=&0.
\end{eqnarray*}
\end{theorem}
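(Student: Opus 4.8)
The plan is to deduce this from Theorem~\ref{thmEinstein} by specializing the Einstein system to the case $c\equiv 0$. Setting $c$ and all of its derivatives to zero in the six equations (\ref{e7}) annihilates every $c$-term, so the system collapses to
\begin{gather*}
a_{11}-b_{22}=0,\quad b_{12}=0,\quad a_{12}=0,\\
a_2b_2+ba_{22}=0,\quad a_2b_1=0,\quad a_1b_1+ab_{11}=0.
\end{gather*}
My strategy is to first exploit the three linear equations $a_{12}=0$, $b_{12}=0$ and $a_{11}=b_{22}$ to pin down the form of $a$ and $b$, and only then to interpret the three remaining nonlinear equations as the asserted constraints.

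First I would use $a_{12}=0$: the vanishing mixed partial means $a_1$ is independent of $u_2$, so $a$ separates as a function of $u_1$ plus a function of $u_2$; likewise $b_{12}=0$ makes $b$ separate. Writing $a=f(u_1)+B(u_2)$ and $b=g(u_2)+D(u_1)$, the equation $a_{11}=b_{22}$ becomes $f''(u_1)=g''(u_2)$. This is the one genuinely nontrivial step: since the left-hand side depends only on $u_1$ and the right-hand side only on $u_2$, both must equal a common constant, which I call $2K$. Integrating $f''=2K$ gives $f=Ku_1^2+Au_1+\mathrm{const}$ and similarly $g=Ku_2^2+Cu_2+\mathrm{const}$; absorbing the two leftover integration constants into $B(u_2)$ and $D(u_1)$ respectively produces exactly the claimed expressions $a=Ku_1^2+Au_1+B(u_2)$ and $b=Ku_2^2+Cu_2+D(u_1)$ with $K,A,C$ constant.

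It then remains to feed these explicit forms into the three nonlinear equations. Since $a_2=B_2$ and $b_1=D_1$, the equation $a_2b_1=0$ is immediately $B_2D_1=0$, the first constraint. For the second, I would note that $b_1=D_1$ and $b_{11}=D_{11}$, so $a_1b_1+ab_{11}=a_1D_1+aD_{11}=\bigl(D_1\,a\bigr)_1=\bigl(D_1(Ku_1^2+Au_1+B)\bigr)_1$ by the product rule; symmetrically, using $a_2=B_2$ and $a_{22}=B_{22}$, one gets $a_2b_2+ba_{22}=\bigl(B_2(Ku_2^2+Cu_2+D)\bigr)_2$. This yields the last two constraints. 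Reading every step as an equivalence closes both directions at once, since a pair $a,b$ of the stated form automatically satisfies the three linear equations and, granted the three constraints, also the nonlinear ones. I expect the only delicate points to be the separation-of-variables argument and the careful bookkeeping of which integration constant is absorbed into $B$ and which into $D$; the rest is direct substitution.
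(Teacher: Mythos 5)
Your proposal is correct and follows essentially the same route as the paper: the paper likewise reduces the Einstein condition to the system $a_{12}=b_{12}=0$, $a_{11}=b_{22}$, $a_2b_1=0$, $a_1b_1+ab_{11}=0$, $a_2b_2+ba_{22}=0$, separates variables, concludes $\bar a_{11}=\hat b_{22}=\mathrm{const}$, and then reads off the three nonlinear constraints by substitution. The only (cosmetic) differences are that you derive the reduced system by setting $c=0$ in (\ref{e7}) rather than stating it outright, and you are slightly more careful with the normalization of the constant ($f''=2K$ versus the paper's $\bar a_{11}=K$ followed by $a=Ku_1^2+\cdots$).
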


\begin{proof}
The Einstein condition is equivalent to the following:
\begin{itemize}
 \item (i) $a_{12} =0$ and $b_{12} =0$;
 \item (ii) $a_{11}-b_{22} =0$;
 \item (iii) $a_2b_1  =0$;
 \item (iv) $a_1b_1 + ab_{11} =0$ and $a_2b_2 + ba_{22} =0$.
\end{itemize}
We divide the proof of the proposition into two steps.
\begin{enumerate}
 \item Step 1. The PDE system (i) imply that $a$ and $b$ take the following forms:
 \begin{eqnarray}\label{e9}
  a= \bar{a}(u_1) + \hat{a}(u_2)\quad \mbox{and}\quad
  b= \bar{b}(u_1) + \hat{b}(u_2).
 \end{eqnarray}
Substituting these functions $a$ and $b$ from (\ref{e9}) in the equation (ii), we get
\begin{eqnarray*}
 \bar{a}_{11}=\hat{b}_{22}.
\end{eqnarray*}
Therefore we have the following:
\begin{eqnarray*}
 \bar{a}_{11}(x) = K \quad \mbox{and}\quad \hat{b}_{22}(y)=K
\end{eqnarray*}
where $K$ is a constant. Then $\bar{a}$ (respectively $\hat{b}$) is a quadratic function of $u_1$ (respectively
$u_2$). Therefore we have the following
\begin{eqnarray}\label{e10}
 a= Ku^{2}_{1} + Au_1 + B(u_2) \quad b= Ku^{2}_{2} + Cu_2 + D(u_1)
\end{eqnarray}
where $A$ and $C$ are constants, $B=\hat{a}$ (respectively $D=\bar{b}$) are smooth functions of $u_2$ (respectively
$u_1$).
\item Step 2. The functions $a$ and $b$ in (\ref{e10}) satisfy the (i) and (ii) PDEs in the Einstein conditions. We 
must consider further conditions for $a$ and $b$ to satisfy the (iii) and (iv) PDE in the Einstein condition.
\begin{itemize}
 \item From the (iii) PDE's in the Einstein condition, we get the following condition:
\begin{eqnarray*}
 B_2 D_1 = 0.
\end{eqnarray*}
\item From (iv), the two equations $a_1b_1 + ab_{11} =0$ and $a_2b_2 + ba_{22}=0$ gives
\begin{eqnarray*}
\big( D_1 (u^{2}_{1}K + u_1A + B) \big)_1 &=& 0,\\
\Big( B_2 (u^{2}_{2}K + u_2C + D) \big)_2 &=&0,
\end{eqnarray*}
\end{itemize}
 \end{enumerate}
 which complete the proof.
\end{proof}
\begin{example}{\rm
Let us consider, a Walker metric $(M,g_{a,b,c})$ given by (\ref{e8}).
\begin{itemize}
\item If $b=0$ and $c=0$, then $(M,g_{a,b,c})$ is Einstein if and only if $a=Ku_1 +A$, where $K,A$ are
constant. Such a metric is Ricci flat.
\item If $a=0$ and $c=0$, then $(M,g_{a,b,c})$ is Einstein if and only if $b=Ku_2 +B$, where $K,B$ are
constant. Such a metric is Ricci flat.
\item The Walker metric with $a=Ku^{2}_{1}, b=Ku^{2}_{2}$ and $c=0$ is Einstein with non-zero scalar curvature 
equal to $4K$.
\end{itemize}
}
\end{example}
\begin{remark}{\rm
The description of the Einstein for the Walker metrics with $c=0$ can be also fund in \cite{cgm}, obtaining 
two different families for the scalar curvature being zero or not. However the problem in the general case 
remains open.}
\end{remark}
Four-dimensional Einstein Walker manifolds form the underling structure of many geometric and physical models
such as: \textit{hh}-space in general relativity, \textit{pp}-wave models and others areas.

\section{Locally symmetric Einstein-Walker metrics}\label{einsteinwalker}

This section deals with the locally symmetry of Einstein Walker manifolds. Let us consider the Einstein-Walker metric given in Theorem \ref{thmEinstein2},
\begin{eqnarray*}
a= Ku^{2}_{1} + Au_1 + B(u_2)\quad \mbox{and}\quad
b=  Ku^{2}_{2} + Cu_2 + D(u_1),
\end{eqnarray*}
where $K,A$ and $C$ are constants and $B,D$ are smooth functions satisfying the following PDE's:
\begin{eqnarray*}
B_2 D_1 &=&0,\\
(D_1 (u^{2}_{1}K + u_1A + B))_1 &=& 0,\\
(B_2 (u^{2}_{2}K + u_2C + D))_2 &=&0.
\end{eqnarray*}
\noindent
An Einstein-Walker manifold $(M,g_{a,b})$ is said to be locally symmetric if the curvature tensor $R$ of $(M,g_{a,b})$ satisfies
$
(\nabla_X R)(Y,Z,T,W)=0,
$
for any $X,Y,Z,T,W \in \Gamma(TM)$. By a straightforward calculation, we can see that the condition for the 
Einstein-Walker metric (Theorem \ref{thmEinstein2}) to be locally symmetric is equivalent to the following PDEs
\begin{eqnarray*}
a_1a_2b_2 &=&0, \quad a_1b_1b_2=0, \quad a_1a_{22}=0, \quad a_1b_{11}=0, \\
a_2b_{11} &=& 0, \quad b_1a_{22} =0, \quad b_2a_{22}=0, \quad b_2b_{11}=0, \\
aa_{11}b_1 &=& 0, \quad ba_2b_{22}=0.
\end{eqnarray*}
From the PDEs, We have the following results
\begin{theorem}\label{thmLocallySymmetric}
 A Walker metric given in Theorem \ref{thmEinstein2} is locally symmetric Einstein if and only if the functions $a(u_1,u_2)$ and
 $b(u_1,u_2)$ are constant.
\end{theorem}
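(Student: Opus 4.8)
The plan is to substitute the Einstein data of Theorem~\ref{thmEinstein2} into the ten local-symmetry relations and to run a case analysis on the resulting one-variable identities. From $a=Ku_1^2+Au_1+B(u_2)$ and $b=Ku_2^2+Cu_2+D(u_1)$ one records $a_1=2Ku_1+A$, $a_2=B_2$, $a_{11}=2K$, $a_{22}=B_{22}$ and, symmetrically, $b_1=D_1$, $b_2=2Ku_2+C$, $b_{11}=D_{11}$, $b_{22}=2K$. Each of the ten equations becomes a product whose factors are polynomials in a single variable or derivatives of $B$ or $D$, and such a product vanishes identically only if one factor does; this is the mechanism that drives the argument. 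I would exploit $a_1a_{22}=0$, $a_1b_{11}=0$, $b_2a_{22}=0$, $b_2b_{11}=0$ first: the relation $(2Ku_1+A)B_{22}=0$ has its two factors in different variables, so either $K=A=0$ or $B_{22}\equiv0$, and likewise for $D_{11}$. Thus in the range $K\neq0$ both $B$ and $D$ are affine, and then $aa_{11}b_1=0$ and $ba_2b_{22}=0$ read $2K\,a\,D_1=0$ and $2K\,b\,B_2=0$, which force $D_1\equiv B_2\equiv0$ since $K\neq0$ and $a,b$ are not identically zero. At this stage $a=Ku_1^2+Au_1+\mathrm{const}$ and $b=Ku_2^2+Cu_2+\mathrm{const}$, and the six remaining relations hold automatically.

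The decisive step is the last one, forcing $K=A=C=0$, and it is here that I expect the real obstacle. Every one of the ten relations carries a factor drawn from $\{a_2,b_1,a_{22},b_{11}\}$; for the choice $a=Ku_1^2$, $b=Ku_2^2$ all four of these vanish, so the \emph{entire} system is satisfied identically for every value of $K$, and nothing in it can produce the constraint $K=0$. This is not a defect of bookkeeping: $M_{Ku_1^2,Ku_2^2,0}$ is exactly the Einstein metric of scalar curvature $4K$ recorded in the Example following Theorem~\ref{thmEinstein2}; its Levi-Civita connection decouples into the orthogonal sectors $\mathrm{span}\{\partial_1,\partial_3\}$ and $\mathrm{span}\{\partial_2,\partial_4\}$, so the manifold is locally a product of two surfaces of constant Gaussian curvature and is therefore locally symmetric while $a$ and $b$ are \emph{not} constant. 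Hence the ten relations cannot deliver the conclusion as stated.

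I would therefore expect the displayed system to be incomplete: local symmetry in fact also imposes higher-order conditions. For instance, when $b=c=0$ a direct computation of $(\nabla_{\partial_2}R)_{2323}$ gives $\tfrac12 B_{222}$, so $B_{222}=0$ is forced although it is not a consequence of the ten relations. Even after adjoining such conditions the pure quadratic above survives, so the honest conclusion should replace ``$a,b$ are constant'' by ``$a=Ku_1^2+Au_1+\mathrm{const}$ and $b=Ku_2^2+Cu_2+\mathrm{const}$'', i.e.\ that $(M,g_{a,b})$ is locally a product of constant-curvature surfaces; under the extra hypothesis of vanishing scalar curvature ($K=0$) the curvature becomes flat and the argument is much closer to closing. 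Pinning down precisely which relations $\nabla R=0$ produces, and hence the correct statement, is the crux on which the proof turns.
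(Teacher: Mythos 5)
Your analysis is correct, and it does not merely take a different route from the paper --- it refutes the theorem as stated. The paper offers no actual proof: it asserts the ten product relations and then asserts the conclusion. Your key observation is exactly right: every one of the ten relations contains a factor drawn from $\{a_2,\,b_1,\,a_{22},\,b_{11}\}$, and all four of these vanish for $a=Ku_1^2$, $b=Ku_2^2$ (the case $A=C=0$, $B=D=0$ of Theorem~\ref{thmEinstein2}), so the entire system holds identically for every $K$. That metric is the paper's own example of an Einstein Walker metric with scalar curvature $4K$, and it is genuinely locally symmetric: with $c=0$ and $a,b$ depending only on $u_1$ and $u_2$ respectively, the metric splits as the product $\left(2\,du_1du_3+Ku_1^2\,du_3^2\right)\oplus\left(2\,du_2du_4+Ku_2^2\,du_4^2\right)$ of two Lorentzian surfaces, each of constant sectional curvature $-K$ (the only curvature component of the first factor is $R_{1313}=\tfrac12 a_{11}=K$ against $\det=-1$), and a product of constant-curvature surfaces satisfies $\nabla R=0$; a direct check of all components of $\nabla R$ from the displayed connection confirms this. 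Hence the ``only if'' direction of Theorem~\ref{thmLocallySymmetric} fails.

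Your secondary criticism is also well founded: the ten relations cannot be the full content of $\nabla R=0$ on the Einstein family. For $a=B(u_2)$, $b=c=0$ (admissible in Theorem~\ref{thmEinstein2} with $K=A=C=0$, $D=0$) all ten relations hold identically for arbitrary $B$, yet $(\nabla_{\partial_2}R)_{2323}=\tfrac12 B_{222}$, so local symmetry imposes a third-order condition absent from the list. Thus both the claimed equivalence preceding the theorem and the theorem itself need repair. Your proposed conclusion --- $a=Ku_1^2+Au_1+\mathrm{const}$, $b=Ku_2^2+Cu_2+\mathrm{const}$, i.e.\ a local product of constant-curvature surfaces --- is the right shape for the main branch, though be aware that degenerate branches with nilpotent curvature (e.g.\ $a$ quadratic in $u_2$ with $b=0$) may also survive, so the definitive statement requires recomputing the complete set of components of $\nabla R$ rather than working from the paper's list.
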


\section{Locally conformally flat Walker metrics}\label{conformal}

Let $W$ denote the Weyl conformal curvature tensor given by
\begin{eqnarray*}
 W(X,Y,Z,T) :&=& R(X,Y,Z,T)\nonumber\\
 &+& \frac{Sc}{(n-1)(n-2)}\Big \{g(Y,Z)g(X,T)-g(X,Z)g(Y,T)\Big \}\nonumber\\
 &+& \frac{1}{n-2} \Big\{\rho(Y,Z)g(X,T) - \rho(X,Z)g(Y,T)\nonumber\\
 &-& \rho(Y,T)g(X,Z) +\rho(X,T)g(Y,Z) \Big\}.
\end{eqnarray*}

A pseudo-Riemannian manifold is locally conformally flat if and only if its Weyl tensor vanishes.
The nonzero components of Weyl tensor of the Walker metric defined by (\ref{e1}) are given by
\begin{eqnarray}\label{e11}
 W_{1313} &=& \frac{a_{11}}{6} + \frac{b_{22}}{6} - \frac{c_{12}}{6},\quad \quad
 W_{1314} = -\frac{b_{12}}{4} + \frac{c_{11}}{4},\nonumber\\
 W_{1323} &=& \frac{a_{12}}{4} - \frac{c_{22}}{4},\quad \quad
 W_{1324} = \frac{c_{12}}{2},\nonumber\\
 W_{1334} &=& \frac{ca_{11}}{12} -\frac{ab_{12}}{4} -\frac{cb_{22}}{6} +\frac{5cc_{12}}{12} +\frac{bc_{22}}{4},\nonumber\\
 W_{1414} &=& \frac{b_{11}}{2},\quad
 W_{1423} = -\frac{a_{11}}{12} -\frac{b_{22}}{12} +\frac{c_{12}}{3},\quad
 W_{1424} = \frac{b_{12}}{4} -\frac{c_{11}}{4};\nonumber\\
 W_{1434} &=& \frac{ba_{11}}{12} +\frac{ab_{11}}{4} +\frac{cb_{12}}{4} +\frac{bb_{22}}{12} -\frac{cc_{11}}{4}
 -\frac{bc_{12}}{12},\nonumber \\
 W_{2323} &=& \frac{a_{22}}{2},\quad
 W_{2324} = -\frac{a_{12}}{4} + \frac{c_{22}}{4},\nonumber\\
 W_{2334} &=& -\frac{aa_{11}}{12} -\frac{ca_{12}}{4} -\frac{ba_{22}}{4} -\frac{ab_{22}}{12} +\frac{ac_{12}}{12}
 -\frac{cc_{22}}{4},\nonumber\\
 W_{2424} &=& \frac{a_{11}}{6} +\frac{b_{22}}{6} -\frac{c_{12}}{6},\quad
 W_{2434} = \frac{ca_{11}}{6} + \frac{ba_{12}}{4} -\frac{cb_{22}}{12} -\frac{ac_{11}}{4} -\frac{5cc_{12}}{12},\nonumber
 \end{eqnarray}
 \begin{eqnarray}
 W_{3434} &=& \frac{c^2a_{11}}{6} +\frac{aba_{11}}{12} +\frac{bca_{12}}{2} +\frac{b^2a_{22}}{4}\nonumber\\
 &+& \frac{a^2b_{11}}{4} + \frac{acb_{12}}{2} +\frac{c^2b_{22}}{6} +\frac{abb_{22}}{12}\nonumber\\
 &-& \frac{acc_{11}}{2} -\frac{2c^2c_{12}}{3} -\frac{abc_{12}}{3} -\frac{bcc_{22}}{2}\nonumber\\
 &+& \frac{ba_1c_2}{4} -\frac{ca_1b_2}{4} +\frac{ca_2b_1}{4} -\frac{ba_2c_1}{4} -\frac{ab_1c_2}{4} +\frac{ab_2c_1}{4}.
\end{eqnarray}
Now it is possible to obtain the form of a locally conformall flat Walker metric as follows:

\begin{theorem}\label{thmLocallyConformallyFlat}
A Walker metric (\ref{e1}) is locally conformally flat if and only if the functions $a=a(u_1,u_2), b=b(u_1,u_2)$ and
 $c=c(u_1,u_2)$ are as follows
 \begin{eqnarray*}
  a&=& \frac{I}{2}u^{2}_{1} + Ju_1 + Eu_1u_2 + Fu_2 + K,\\
  b&=& -\frac{I}{2}u^{2}_{2} + Lu_2 + Mu_1u_2 + Nu_1 +R,\\
  c&=& \frac{M}{2}u^{2}_{1} + Pu_1 + \frac{E}{2}u^{2}_{2} + Gu_2 +(Q+H),
 \end{eqnarray*}
where the constants $E,F,G,H,I,J,K,L,M,N,P,Q$ and $R$ satisfy the following relations
\begin{eqnarray*}
0 &=& EN - JM +IP, \\
0 &=& EL - FM +IG, \\
0 &=& ER - KM +I(H+Q),\\
0 &=& K(LP-NG) + R(JG-FP) + (Q+H)(FN-JL).
\end{eqnarray*}
\end{theorem}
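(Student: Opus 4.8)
The plan is to use the fact recalled just above the statement, that a pseudo-Riemannian four-manifold is locally conformally flat if and only if its Weyl tensor vanishes; the theorem then reduces to solving the system $W_{ijkl}=0$ over the components displayed in (\ref{e11}). The first thing I would do is sort these components into two groups: the ones that are \emph{linear} in the second derivatives of $a,b,c$, namely $W_{1313},W_{1314},W_{1323},W_{1324},W_{1414},W_{1423},W_{1424},W_{2323},W_{2324},W_{2424}$, and the \emph{quadratic} ones $W_{1334},W_{1434},W_{2334},W_{2434},W_{3434}$, which in addition carry products of $a,b,c$ with first and second derivatives. The linear group pins down the shape of $a,b,c$, and the quadratic group produces the algebraic side conditions on the constants.

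\textbf{Step 1.} Setting the ten linear components to zero and discarding duplicates collapses to the reduced system
\[
c_{12}=0,\quad b_{11}=0,\quad a_{22}=0,\quad c_{11}=b_{12},\quad c_{22}=a_{12},\quad a_{11}+b_{22}=0 .
\]
I would integrate this by hand. From $a_{22}=0$ and $b_{11}=0$, the function $a$ is affine in $u_2$ and $b$ is affine in $u_1$, while $c_{12}=0$ forces $c$ to split as a function of $u_1$ plus a function of $u_2$. Substituting these shapes into $c_{11}=b_{12}$, $c_{22}=a_{12}$ and $a_{11}+b_{22}=0$ separates the variables, so each side equals a constant; calling these constants $M$, $E$ and $\pm I$ and integrating the resulting second-order ODEs reproduces exactly the quadratic polynomials for $a$, $b$, $c$ in the statement, with $E,F,\dots,R$ as the fourteen constants of integration.

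\textbf{Step 2.} Substituting these polynomials into the quadratic components, the decisive simplification is that every ``function $\times$ Hessian-entry'' term cancels in pairs once the Step~1 relations are in force (for instance $\tfrac{c^{2}a_{11}}{6}$ cancels $\tfrac{c^{2}b_{22}}{6}$ because $a_{11}+b_{22}=0$, and similarly for the others). A direct check then shows that the components $W_{1434}$ and $W_{2334}$ contribute nothing new---each either vanishes identically or is absorbed by the relations obtained below---while $W_{1334}$ and $W_{2434}$ both reduce to the single affine expression $\tfrac14\,(I c - M a + E b)$. Reading off the coefficients of $u_1$, $u_2$ and $1$ in this expression yields precisely the first three relations $EN-JM+IP=0$, $EL-FM+IG=0$ and $ER-KM+I(H+Q)=0$.

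\textbf{Step 3 (the main obstacle).} The remaining equation is $W_{3434}=0$. After the same pairwise cancellation of all Hessian terms, I expect $W_{3434}$ to collapse to the Wronskian-type determinant
\[
W_{3434}=-\frac{1}{4}\det\begin{pmatrix} a & a_1 & a_2 \\ b & b_1 & b_2 \\ c & c_1 & c_2 \end{pmatrix}.
\]
Evaluating at $u_1=u_2=0$ replaces the three rows by their constant terms $(K,J,F)$, $(R,N,L)$ and $(Q+H,P,G)$, and the vanishing of this $3\times3$ determinant is exactly the fourth relation $K(LP-NG)+R(JG-FP)+(Q+H)(FN-JL)=0$. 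The real labor, and the step I expect to be delicate, is the bookkeeping needed to show that \emph{all the remaining} coefficients of this degree-four polynomial in $u_1,u_2$ vanish automatically once the first three relations hold; equivalently, that modulo those relations the determinant is constant and equal to its value at the origin. Granting this, $W_{3434}=0$ is equivalent to the fourth relation, so the four displayed relations are together necessary and sufficient; the converse direction is immediate, since substituting the polynomials subject to all four relations makes every component in (\ref{e11}) vanish.
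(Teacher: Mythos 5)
Your proposal follows essentially the same route as the paper: use the Weyl components that are linear in the Hessians of $a,b,c$ to force the quadratic polynomial forms, then substitute into the remaining components to extract the algebraic relations on the constants; your identifications of $W_{1334}$ and $W_{2434}$ with $\tfrac14(Ic-Ma+Eb)$ and of $W_{3434}$ with $-\tfrac14$ times the determinant of the matrix with rows $(a,a_1,a_2)$, $(b,b_1,b_2)$, $(c,c_1,c_2)$ are correct and in fact give more structure than the paper's ``after some straightforward calculations.'' The one step you defer (``granting this'') closes in two lines and should not be left as bookkeeping: if $(I,E,M)\neq(0,0,0)$, the identity $Ic-Ma+Eb=0$ forced by the first three relations, together with its $u_1$- and $u_2$-derivatives, exhibits a nontrivial linear dependence $-M(a,a_1,a_2)+E(b,b_1,b_2)+I(c,c_1,c_2)=0$ among the rows, so the determinant vanishes identically and the fourth relation is automatic; if $(I,E,M)=(0,0,0)$, then $a,b,c$ are affine and expanding the determinant along its first column shows the coefficients of $u_1$ and $u_2$ cancel identically, so the determinant equals its value at the origin. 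Either way $W_{3434}=0$ reduces exactly to the fourth displayed relation, and your argument is complete.
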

\begin{proof}
Sine the locally conformally flat is equivalently to the vanishing of the Weyl tensor, let consider (\ref{e11}) 
as a system of PDEs. We will prove the theorem in three steps.
\begin{enumerate}
\item Step 1. Considering the following components of the Weyl tensor of (\ref{e11}):
\begin{eqnarray}\label{e12}
W_{1324} = \frac{c_{12}}{2}=0, \quad
W_{1414} = \frac{b_{11}}{2}=0 \quad \mbox{and} \quad
W_{2323} = \frac{a_{22}}{2}=0.
\end{eqnarray}
The PDEs (\ref{e12}) imply that the functions $a,b$ and $c$ take the form
\begin{eqnarray*}
a(u_1,u_2)&=&u_2\bar{a}(u_1) + \hat{a}(u_1),\\
b(u_1,u_2)&=&u_1\bar{b}(u_2) + \hat{b}(u_2),\\
c(u_1,u_2)&=&\bar{c}(u_1) + \hat{c}(u_2).
\end{eqnarray*}
Considering the result of the step 1, the Weyl equations of (\ref{e11}) reduce to
\begin{eqnarray}\label{e13}
 W_{1313} &=& \frac{1}{6}(a_{11}+b_{22}),\quad
 W_{1314} = \frac{1}{4}(-b_{12}+c_{11}),\quad
 W_{1323} = \frac{1}{4}(a_{12}-c_{22}),\nonumber\\
 W_{1334} &=& \frac{ca_{11}}{12} -\frac{ab_{12}}{4} -\frac{cb_{22}}{6} +\frac{bc_{22}}{4},\quad
 W_{1423} = -\frac{1}{12}(a_{11}+b_{22}),\nonumber\\
 W_{1424} &=& \frac{1}{4}(b_{12}-c_{11}),\quad
 W_{1434} = \frac{b}{12}(a_{11}+b_{22}) +\frac{c}{4}(b_{12}-c_{11}),\nonumber\\
 W_{2324} &=& \frac{1}{4}(-a_{12}+c_{22}),\quad
 W_{2334} = -\frac{a}{12}(a_{11}+b_{22}) -\frac{c}{4}(a_{12}-c_{22}),\nonumber\\
 W_{2424} &=& \frac{1}{6}(a_{11}+b_{22}),\quad
 W_{2434} = \frac{ca_{11}}{6} + \frac{ba_{12}}{4} -\frac{cb_{22}}{12} -\frac{ac_{11}}{4},\nonumber\\
 W_{3434} &=& \frac{c^2}{6}(a_{11}+b_{22}) +\frac{ab}{12}(a_{11}+b_{22}) +\frac{bc}{2}(a_{12}-c_{22})
 + \frac{ac}{2}(b_{12}-c_{11}) \nonumber\\
 &+& \frac{ba_1c_2}{4} -\frac{ca_1b_2}{4} +\frac{ca_2b_1}{4} -\frac{ba_2c_1}{4} -\frac{ab_1c_2}{4} +\frac{ab_2c_1}{4}.
\end{eqnarray}
\item Step 2. Considering the following components of the PDEs (\ref{e13}):
\begin{eqnarray}\label{e14}
 W_{1313} = 0,\quad
 W_{1314} = 0,\quad \mbox{and}\quad
 W_{1323} = 0.
\end{eqnarray}
The PDEs (\ref{e14}) imply that the functions $a,b$ and $c$ take the form
\begin{eqnarray*}
 a&=& u_2(Eu_1 + F) + \frac{I}{2}u^{2}_{1} + Ju_1 + K,\\
 b&=& u_1(Mu_2 + N) - \frac{I}{2}u^{2}_{2} + Lu_2 + R, \\
 c&=& \frac{M}{2}u^{2}_{1} + Pu_1 + Q + \frac{E}{2}u^{2}_{2} + Gu_2 + H. 
\end{eqnarray*}
 Considering the result of the step 2, the Weyl equations (\ref{e13}) reduce to
\begin{eqnarray}\label{e15}
 W_{1334} &=& \frac{ca_{11}}{12} -\frac{ab_{12}}{4} -\frac{cb_{22}}{6} +\frac{bc_{22}}{4},\nonumber\\
 W_{2434} &=& \frac{ca_{11}}{6} + \frac{ba_{12}}{4} -\frac{cb_{22}}{12} -\frac{ac_{11}}{4},\nonumber\\
 W_{3434} &=& \frac{ba_1c_2}{4} -\frac{ca_1b_2}{4} +\frac{ca_2b_1}{4} -\frac{ba_2c_1}{4} -\frac{ab_1c_2}{4} +\frac{ab_2c_1}{4}.
\end{eqnarray}
\item Step 3. From (\ref{e15}), after some straightforward calculations, the following PDES
\begin{eqnarray*}
 W_{1334} =0,\quad W_{2434} = 0\quad \mbox{and}\quad W_{3434} = 0
\end{eqnarray*}
gives
\begin{eqnarray*}
0 &=& EN - JM +IP, \\
0 &=& EL - FM +IG, \\
0 &=& ER - KM +I(H+Q),\\
0 &=& K(LP-NG) + R(JG-FP) + (Q+H)(FN-JL).
\end{eqnarray*}
\end{enumerate}
This finish the proof.
\end{proof}
\begin{remark}{\rm
 From (\ref{e5}) and Theorem \ref{thmLocallyConformallyFlat}, we see that the locally conformally flat metric (\ref{e1}) 
 has vanishing scalar curvature.}
\end{remark}



\end{document}